\renewcommand{\mkbegdispquote}[2]{\itshape}
\newcommand{\cA}{\mathcal{A}}
\newcommand{\cB}{\mathcal{B}}
\newcommand{\E}{\mathbb{E}}
\newcommand{\R}{\mathbb{R}}
\newcommand{\cL}{{\mathcal L}}
\newcommand{\id}{{\mathbf 1}}
\newcommand{\cY}{{\mathcal Y}}
\newcommand{\cX}{{\mathcal X}}
\newtheorem{theorem}{Theorem}
\newtheorem{assumption}[theorem]{Assumption}
\newtheorem{lemma}[theorem]{Lemma}
\newtheorem{proposition}[theorem]{Proposition}
\newtheorem{remark}[theorem]{Remark}
\theoremstyle{definition}
\numberwithin{equation}{section}
\numberwithin{theorem}{section}
\begin{document}

\title{Existence of Markov equilibrium control in discrete time}

\author{
	Erhan Bayraktar \thanks{Department of Mathematics, University of Michigan, Ann Arbor, Email: erhan@umich.edu. Erhan Bayraktar is partially supported by the National Science Foundation under grant DMS-2106556 and by the Susan M. Smith chair.}
	\and Bingyan Han \thanks{Department of Mathematics, University of Michigan, Ann Arbor. Email: byhan@umich.edu.}
}

\date{December 8, 2023}
\maketitle

\begin{abstract}
	For time-inconsistent stochastic controls in discrete time and finite horizon, an open problem in Bj\"ork and Murgoci (Finance Stoch, 2014) is the existence of an equilibrium control. A nonrandomized Borel measurable Markov equilibrium policy exists if the objective is inf-compact in every time step. We provide a sufficient condition for the inf-compactness and thus existence, with costs that are lower semicontinuous (l.s.c.) and bounded from below and transition kernels that are continuous in controls under given states. The control spaces need not to be compact.
	\\[2ex] 
	\noindent{\textbf {Keywords}: Time inconsistency, Borel measurable selection, inf-compact functions.}
	\\[2ex]
	\noindent{\textbf {MSC codes:} 49L20, 28B20, 91A80.} 
\end{abstract}

\section{Introduction} 
In this short note, we revisit discrete-time stochastic controls with time-inconsistent objectives studied in \cite{bjork14FS}. Unlike standard stochastic optimal control problems, time inconsistency reflects the phenomenon that agent preferences change in time. Bellman optimality principle fails. The optimal plan at the present moment is not followed in the future. Classical examples include non-exponential discounting \citep{strotz1955myopia}, prospect theory \citep{kahneman1979prospect}, mean-variance portfolio selection \citep{basak10}, and optimal stopping \citep{bayraktar2021equilibrium}. See the recent monograph \cite{bjork2021time} and reference therein for a comprehensive review.  

As proposed in \cite{strotz1955myopia}, agents should only consider the policy that they can actually follow. By the consistent planning framework in \cite{strotz1955myopia}, agents acknowledging time inconsistency view themselves at different future times as distinct planners. In this context, an equilibrium policy $\pi^* = (u^*_1, \ldots, u^*_{T-1})$ should satisfy that, if all future selves at time $t+1, \ldots, T-1$ use $u^*_{t+1}, \ldots, u^*_{T-1}$, then it is optimal for the planner at time $t$ to use $u^*_t$. Thus, there is no incentive for the planner to deviate from $\pi^*$ at any time $t$. A formal definition is given in \citet[Definition 2.5]{bjork14FS}.

If equilibrium policies exist, \citet[Theorem 3.14]{bjork14FS} characterizes them with the extended Bellman equation. However, the existence of an equilibrium policy remains open \cite[Section 6]{bjork14FS}. Recall the Borel space framework in \citet[Chapters 7 and 8]{bertsekas1978stoch} for the classical stochastic optimal control. The existence of optimal policies is established through measurable selection theorems. The optimal policy obtained usually has weaker properties compared with the objective function. For instance, if the objective is l.s.c., then an optimal policy, if it exists, is Borel measurable \cite[Proposition 7.33]{bertsekas1978stoch}. If the objective is lower semianalytic, then an $\varepsilon$-optimal policy is universally measurable \cite[Proposition 7.50]{bertsekas1978stoch}. However, the extended Bellman equation introduces auxiliary functions \eqref{eq:bfh}, tied to the optimal policy, and thus inheriting the weaker properties. Consequently, demonstrating the existence of an equilibrium policy via a recursive application of measurable selection becomes challenging.

The key observation is, if the transition kernels of states have nice properties, then it may still be possible to apply certain measurable selection theorem repeatedly. The weak continuity of transition kernels is often inadequate. But a stronger continuity condition, given in Assumption \ref{assum} (4), is sufficient for the existence of an equilibrium policy. Notably, Assumption \ref{assum} (4) is not new and has been considered in \cite{hernandez1991scl} on average cost Markov decision processes. It is relevant to setwise convergence and also used in \citet[Assumption 8.3.1 (c)]{hernandez1999} and \citet[Assumption 3.1 (b)]{saldi2018finite}. In conjunction with a version of the Borel measurable selection theorem \cite[Theorem 4.1 and Corollary 4.3]{rieder1978measurable} under the inf-compact condition, we managed to prove the existence of a nonrandomized Borel measurable Markov equilibrium policy.

In the literature, \cite{jaskiewicz2021markov} explored infinite-horizon Markov decision processes with quasi-hyperbolic discounting in their equation (2.2). The existence of a randomized stationary Markov perfect equilibrium is proved with continuous costs and transition kernels with densities; see \citet[Assumptions C3.1 -- C3.2]{jaskiewicz2021markov}. While \citet[Assumption C3.2]{jaskiewicz2021markov} closely resembles Assumption \ref{assum} (4), their proof differs since they assumed compactness instead of inf-compactness and relied on the structure of quasi-hyperbolic discounting to apply the fixed point theorem. Other existence results are available for different formulations. \cite{huang2021strong} considered weak and strong equilibria in the continuous time under certain convexity (concavity) conditions. \cite{bayraktar2021equilibrium} and reference therein studied time-inconsistent stopping problems. \cite{bayraktar2023relaxed} examined relaxed equilibria with regularization. \cite{bayraktar2023equilibrium} considered optimal transport with time-inconsistency and provided the existence of the so-called equilibrium transport.

The rest of the paper is organized as follows. Section \ref{sec:form} formulates the general problem, while readers need to refer to \cite{bjork14FS} for the derivation of the extended Bellman equation. Section \ref{sec:exist} establishes the sufficient condition for the existence. Illustrative examples are in Section \ref{sec:example}.

\section{Problem formulation}\label{sec:form}
In this work, a topological space $\cX$ will always be endowed with the Borel $\sigma$-algebra $\cB(\cX)$. A Borel subset of a Polish (complete separable metric) space is called a Borel space. Consider a nonstationary discrete-time controlled Markov process with the following specifications: 
\begin{enumerate}[label={(\arabic*)}]
	\item The finite horizon $T$: A positive integer.
	\item State spaces $\cX_t$, $t \in \{1, \ldots, T\}$: $\cX_t$ is a nonempty Borel space.
	\item Control spaces $\cA_t$, $t \in \{1, \ldots, T-1\}$: $\cA_t$ is a nonempty Borel space. $u_t \in \cA_t$ is a control chosen at time $t$.
	\item Control constraints $U_t$, $t \in \{1, \ldots, T-1\}$: $U_t$ is a function from $\cX_t$ to the set of nonempty subsets of $\cA_t$. Suppose the set
	\begin{equation}
		\Gamma_t := \left\{ (x_t, u_t) | x_t \in \cX_t, u_t \in U_t(x_t) \right\}
	\end{equation}
	is a Borel subset of $\cX_t \times \cA_t$.
	\item Transition kernels $P_t(dx_{t+1}| x_t, u_t)$, $t \in \{1, \ldots, T-1\}$: Suppose $P_t(dx_{t+1}| x_t, u_t)$ is a Borel-measurable stochastic kernel on $\cX_{t+1}$ given $\Gamma_t$. 
\end{enumerate}

The system begins at some state $x_1 \in \cX_1$ and proceeds successively to state spaces $\cX_2, \ldots, \cX_{T-1}$ and finally terminates in $\cX_T$. A policy controlling the system is a sequence $\pi_{t:T-1} := (\pi_t, \ldots, \pi_{T-1})$, where $\pi_t(\cdot|x_{1:t}, u_{1:t-1})$ is a Borel measurable stochastic kernel on $\cA_t$ given the history and it satisfies $\pi_t(U_t(x_t) | x_{1:t}, u_{1:t-1}) = 1$ for every $(x_{1:t}, u_{1:t-1})$. In this work, we consider $\pi_t(\cdot|x_{1:t}, u_{1:t-1})$ that depends only on the current state and time and concentrates at one point for each state $x_t$, which is called nonrandomized Markov policies. Thus, $\pi_t(\cdot|x_{1:t}, u_{1:t-1}) = u_t(x_t)$ for some Borel measurable functions $u_t(\cdot)$. We still use the notation $\pi^t = (u_t, \ldots, u_{T-1})$.

At time $t$, the agent wants to minimize a state-dependent and nonlinear cost objective \cite[Equation 3.8]{bjork14FS}:
\begin{align}
	J_t(x; \pi^t) :=& \int \left[ \sum^{T-1}_{k=t} C_k (t, x, x_k, u_k) + F(t, x, x_T) \right] P(dx_{t+1:T}| x, \pi^t) \nonumber \\
	& + G\left(t, x, \int H(x_T) P(dx_T| x, \pi^t) \right), \label{eq:obj}
\end{align}
where $P(dx_{t+1:T}| x, \pi^t) := P_{T-1}(dx_T| x_{T-1}, u_{T-1}(x_{T-1})) \ldots P_t(dx_{t+1}| x_t, u_t(x_t))$ under the policy $\pi^t$ and $x_t = x$. The costs $C_k(s, y, x_k, u_k)$, $F(s, y, x_T)$, $H(x_T)$, and $G(s, y, h)$ are Borel measurable. Given $t \in \{1, \ldots, T-1\}$, the term $(s, y)$ with $s \in \{1, \ldots, t\}$ and $y \in \cX_s$ represents the state-dependence. For example, consider $F(t, x, x_T)$. At a different time $s \neq t$, it becomes $F(s, x_s, x_T)$ and thus the preference of the agent changes.  $G(s, y, h)$ is nonlinear and does not satisfy the tower property in general. Therefore, Bellman optimality principle fails due to the state-dependence and nonlinearity. We refer readers to \cite{bjork14FS,bjork2021time} for specific applications with time inconsistency.  

Inspired by the consistent planning in \cite{strotz1955myopia}, \citet[Definition 2.5]{bjork14FS} introduced a subgame perfect Nash equilibrium strategy as follows. Given time $t<T$ and state $x_t = x$, define a control policy $\pi^{t, u} = (u, u^*_{t+1}, \ldots, u^*_{T-1})$ on $t, \ldots, T-1$, where $u \in U_t(x_t)$ is an arbitrary control value and $u^*_{s} = u^*_{s}(x_s), \forall \, x_s \in \cX_s, \, s \in\{t+1, \ldots, T-1\},$ are Borel measurable functions. A Borel measurable function $u^*_t(x_t)$ is a nonrandomized Markov equilibrium control at time $t$ if for every $x$, we have
\begin{equation*}
	\inf_{u \in U_t(x_t)} J_t(x; \pi^{t, u}) = J_t(x; \pi^{t, *}),
\end{equation*}
where $\pi^{t, *} = (u^*_t, u^*_{t+1}, \ldots, u^*_{T-1})$. Denote the corresponding equilibrium value function as $V_t(x) = J_t(x; \pi^{t, *})$.

If $\pi^* = (u^*_1, \ldots, u^*_{T-1})$ exists, \citet[Theorem 3.14]{bjork14FS} proved that the value function $V_t$ satisfies the extended Bellman equation given by
{\allowdisplaybreaks
	\begin{align}
	V_t(x_t) = \inf_{u \in U_t(x_t)} \Big\{&  C_t(t, x_t, x_t, u) + \int V_{t+1} (x_{t+1}) P_t(dx_{t+1}| x_t, u) \nonumber \\
	& + \sum^{T-1}_{k=t+1} \int b_k (t, x_t, x_{t+1}) P_t(dx_{t+1}| x_t, u)  \label{eq:original} \\
	& - \sum^{T-1}_{k=t+1} \int b_k (t+1, x_{t+1}, x_{t+1}) P_t(dx_{t+1}| x_t, u) \nonumber \\
	& + \int f(t, x_t, x_{t+1})  P_t(dx_{t+1}| x_t, u) - \int f(t+1, x_{t+1}, x_{t+1})  P_t(dx_{t+1}| x_t, u) \nonumber \\
	 & + G \left( t, x_t, \int h(x_{t+1}) P_t(dx_{t+1}| x_t, u) \right) \nonumber \\
	 & - \int G \left(t+1, x_{t+1}, h(x_{t+1}) \right) P_t(dx_{t+1}| x_t, u) \Big\}. \nonumber 
\end{align} 
}
The functions $b_k$, $f$, and $h$ are defined backwardly as
\begin{equation}\label{eq:bfh}
	\begin{aligned}
			b_k(s, y, x_{t+1}) &:=&& \int C_k (s, y, x_k, u^*_k(x_k)) P(dx_k | x_{t+1}, \pi^{t+1, *}), \quad k \in \{t+1, \ldots, T-1\}, \\
		f(s, y, x_{t+1}) &:=&& \int F(s, y, x_T) P(dx_T| x_{t+1}, \pi^{t+1, *}), \\
		h(x_{t+1}) & := && \int H(x_T) P(dx_T| x_{t+1}, \pi^{t+1, *}),
	\end{aligned}
\end{equation} 
with a nonrandomized Markov equilibrium policy $\pi^{t+1, *} = (u^*_{t+1}, \ldots, u^*_{T-1})$ found for time $t+1, \ldots, T-1$.

In fact, the value function $V_{t+1}$ satisfies
\begin{equation}
	V_{t+1}(x_{t+1}) = \sum^{T-1}_{k=t+1} b_k (t+1, x_{t+1}, x_{t+1}) + f(t+1, x_{t+1}, x_{t+1}) + G(t+1, x_{t+1}, h(x_{t+1})).
\end{equation}
Thus, the extended Bellman equation \eqref{eq:original} reduces to
\begin{equation}\label{eq:prob} 
	V_t(x_t) = \inf_{u \in U_t(x_t)} \cL_{t+1}(t, x_t, x_t, u),
\end{equation}
where 
\begin{align*}
	\cL_{t+1}(s, y, x_t, u) := &  C_t(s, y, x_t, u) + \sum^{T-1}_{k=t+1} \int b_k (s, y, x_{t+1}) P_t(dx_{t+1}| x_t, u) \\
	& + \int f(s, y, x_{t+1})  P_t(dx_{t+1}| x_t, u) + G\left(s, y, \int h(x_{t+1}) P_t(dx_{t+1}| x_t, u) \right). 
\end{align*}
We emphasize that the extended Bellman equation \eqref{eq:prob} relies on the auxiliary functions $b_k$, $f$, and $h$ in \eqref{eq:bfh} backwardly. When the existence of a nonrandomized Markov equilibrium policy $\pi^{t+1, *}$ is proved for time $t+1, \ldots, T-1$, then the characterization of $u^*_t$ is given by \eqref{eq:prob} at time $t$.

\section{Existence}\label{sec:exist}
In Lemma \ref{lem:select}, we quote a Borel measurable selection theorem in \citet[Theorem 4.1 and Corollary 4.3]{rieder1978measurable}, which is relevant to \citet[Corollary 1]{brown1973}. Let $D$ be a subset of a product space $\cX \times \cY$. Denote ${\rm proj}(D)$ as the projection of $D$ onto $\cX$. For $x \in \cX$, the $x$-section of $D$ is denoted by $D_x$. Crucially, Lemma \ref{lem:select} does not require the compactness of $D_x$ for the existence of a minimizer. Instead, it supposes the level sets in \eqref{eq:inf_compact} are compact. Functions satisfying this property are called inf-compact, see \citet[p.28]{hernandez1996}.   
\begin{lemma}{\cite[Theorem 4.1 and Corollary 4.3]{rieder1978measurable}}\label{lem:select}
	Let $\cX$ and $\cY$ be two Borel spaces. Consider a Borel subset $D$ of $\cX \times \cY$ and a Borel measurable function $f(x, y): D \rightarrow \R$. If the set
	\begin{equation}\label{eq:inf_compact}
		\{ y \in D_x | f(x, y) \leq r \}
	\end{equation} 
	is compact for every $r \in \R$ and $x \in {\rm proj}(D)$, then there exists a Borel measurable minimizer $g: {\rm proj}(D) \rightarrow \cY$, such that
	\begin{equation}
		g(x) \in D_x \text{ and } f(x, g(x)) = \inf_{y \in D_x} f(x, y), \quad x \in {\rm proj}(D).
	\end{equation} 
\end{lemma}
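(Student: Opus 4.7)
The strategy is to realize $g$ as a Borel selector of the argmin multifunction $\Phi(x) := \{ y \in D_x : f(x,y) = v(x) \}$, where $v(x) := \inf_{y \in D_x} f(x,y)$ is the value function. First I would verify, for each fixed $x \in \mathrm{proj}(D)$, that the section $f(x,\cdot)$ is lower semicontinuous on $D_x$: its sublevel sets are compact and hence closed in $\cY$. Picking any $y_0 \in D_x$ and setting $r_0 := f(x,y_0)$, the infimum may be restricted to the compact set $\{ y \in D_x : f(x,y) \leq r_0 \}$, so $f(x,\cdot)$ attains its infimum there; thus $\Phi(x) \neq \emptyset$. Moreover $\Phi(x) = \bigcap_n \{ y \in D_x : f(x,y) \leq v(x) + 1/n \}$ is compact.

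The next and most delicate step is to prove that $v$ is Borel measurable on $\mathrm{proj}(D)$. For each $r \in \R$, the set $A_r := \{ (x,y) \in D : f(x,y) \leq r \}$ is Borel with compact $x$-sections, and the Arsenin--Kunugui projection theorem (cf.\ \citet[Chapter 7]{bertsekas1978stoch}) ensures $\mathrm{proj}_{\cX}(A_r)$ is Borel. Since $\{x \in \mathrm{proj}(D) : v(x) < r\} = \bigcup_n \mathrm{proj}_{\cX}(A_{r - 1/n})$, the function $v$ is Borel. Consequently, the graph $\mathrm{Gr}(\Phi) = \{ (x,y) \in D : f(x,y) - v(x) \leq 0 \}$ is a Borel subset of $\cX \times \cY$.

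Finally, $\Phi$ is a compact-valued multifunction with Borel graph defined on the Borel set $\mathrm{proj}(D)$, so a standard measurable selection theorem (a Kuratowski--Ryll-Nardzewski-type result for compact-valued maps with Borel graph into a Borel space) yields a Borel measurable selector $g: \mathrm{proj}(D) \to \cY$ with $g(x) \in \Phi(x)$, which is the desired $g$. The main obstacle is the second step: without the inf-compactness hypothesis, projections of Borel sets are only analytic, and one could at best obtain a universally measurable selector. The compactness of the sublevel sections is precisely what promotes the value function from analytically measurable to Borel measurable, and hence enables a genuine Borel selection.
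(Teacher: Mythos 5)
Your argument is essentially correct, but note that the paper does not prove Lemma \ref{lem:select} at all: it is quoted from \citet[Theorem 4.1 and Corollary 4.3]{rieder1978measurable}, and the accompanying remark merely verifies that inf-compactness implies the hypotheses of \citet[Corollary 1]{brown1973}, namely lower semicontinuity of $f(x,\cdot)$ and $\sigma$-compactness of $D_x$. What you have written is, in effect, a reconstruction of the proof behind those cited results, and it follows the standard descriptive-set-theoretic route: (i) attainment and compactness of the argmin set $\Phi(x)$ from inf-compactness; (ii) Borel measurability of the value function $v$ via the Novikov/Arsenin--Kunugui projection theorem applied to the sublevel sets $A_r$, which are Borel with compact sections; (iii) a Borel uniformization of the Borel graph $\{(x,y)\in D: f(x,y)\le v(x)\}$, whose sections are compact. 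Two small corrections. First, the projection theorem you invoke is not the one in \citet[Chapter 7]{bertsekas1978stoch}: the selection results there either assume $f$ is \emph{jointly} lower semicontinuous or, for merely Borel (lower semianalytic) $f$, deliver only analytically or universally measurable selectors; the statement you actually need --- projections of Borel sets with compact (or $\sigma$-compact) sections are Borel --- is the Novikov/Arsenin--Kunugui theorem, and is precisely what \citet{brown1973} and \citet{rieder1978measurable} rely on. Second, the final step is cleanest if you apply the Arsenin--Kunugui \emph{uniformization} theorem directly to $\mathrm{Gr}(\Phi)$ (a Borel set with compact sections) rather than appealing to a Kuratowski--Ryll-Nardzewski-type result, which would first require weak measurability of $\Phi$; that too follows from the same projection theorem, since $\Phi(x)\cap U$ is $\sigma$-compact for open $U$, but it is an extra step you have not supplied. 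Your closing observation is exactly the right diagnosis: compactness of the sublevel sections is what upgrades both the value function and the selector from universally measurable to Borel, and this is also why the paper needs this particular selection theorem to iterate the construction backward in time.
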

\begin{remark}
	Since compact subsets of metric spaces are closed, the inf-compact assumption in Lemma \ref{lem:select} implies that $f(x, \cdot)$ is l.s.c. with each $x \in {\rm proj}(D)$. Moreover, since $D_x = \cup_{r \in \mathbb{Z}} \{ y \in D_x | f(x, y) \leq r \}$, $D_x$ is $\sigma$-compact with each $x \in {\rm proj}(D)$. Note that \citet[Corollary 1]{brown1973} also holds for Borel spaces \cite[Remark 2]{brown1973}. Hence, the assumptions in \citet[Corollary 1]{brown1973} are satisfied under the assumptions in Lemma \ref{lem:select}.
\end{remark}

If we can verify that
\begin{equation}\label{eq:Linf}
	\{ u \in U_t(x_t) | \cL_{t+1}(t, x_t, x_t, u) \leq r \} 
\end{equation}
is compact for every $r \in \R$ and $x_t \in \cX_t$, then a Borel measurable minimizer $u^*_t(x_t)$ exists for \eqref{eq:prob} at time $t$. If the inf-compact condition \eqref{eq:Linf} can be verified for every time $t$ after the existence of an equilibrium is proved for time $t+1, \ldots, T-1$, then a nonrandomized Markov equilibrium policy exists. Assumption \ref{assum} provides a sufficient condition.
\begin{assumption}\label{assum}
	\begin{enumerate}[label={(\arabic*)}]
		\item The functions $F, G, H$, and $C_t$, $t \in \{1, \ldots, T-1\}$ in the objective \eqref{eq:obj} are Borel measurable, real-valued, and nonnegative.
		\item For every $t \in \{ 1, \ldots, T-1\}$, $C_t (s, y, x_t, u)$ is l.s.c. in $u$ for each $(s, y, x_t)$, where $s \in \{1, \ldots, t\}$, $y \in \cX_s$, and $x_t \in \cX_t$. Moreover, the set
		\begin{equation}\label{eq:lowerset}
			U_t(r, x_t) := \{ u \in U_t(x_t) | C_t(t, x_t, x_t, u) \leq r \}
		\end{equation}
		is compact for every $r \in \R$, $x_t \in \cX_t$, and $t\in \{1, \ldots, T-1\}$.
		\item $G(s, y, h)$ is l.s.c. in $h$ and nondecreasing in $h$ for each $(s, y)$, where $s \in \{1, \ldots, T-1\}$ and $y \in \cX_s$.
		\item For each $t \in \{ 1, \ldots, T-1\}$, $\int V(x_{t+1}) P_t(dx_{t+1}| x_t, u)$ is continuous in $u$ for each $x_t \in \cX_t$ and bounded Borel measurable function $V$. 
	\end{enumerate}
\end{assumption}
\begin{remark}
		Assumption \ref{assum} (4) is equivalent to imposing that $\int V(x_{t+1}) P_t(dx_{t+1}| x_t, u)$ is l.s.c. in $u$ for each $x_t \in \cX_t$ and bounded Borel measurable function $V$. We could also consider nonnegative $V$ only; see \citet[p. 44]{hernandez1999}. Assumption \ref{assum} (3) is a sufficient condition that the composition $G\left(s, y, \int h(x_{t+1}) P_t(dx_{t+1}| x_t, u) \right)$ is l.s.c. in $u$ when $(s, y, x_{t})$ is fixed. However, for a special case in Section \ref{sec:MVP}, the nondecreasing property is not needed and the composition is even continuous.
\end{remark}

An immediate consequence of Assumption \ref{assum} (4) is given by Lemma \ref{lem:lsc}.
\begin{lemma}\label{lem:lsc}
	Suppose Assumption \ref{assum} (4) holds, then $\int V(x_{t+1}) P_t(dx_{t+1}| x_t, u)$ is l.s.c. in $u$ for each $x_t \in \cX_t$ and Borel measurable function $V \geq 0$. 
\end{lemma}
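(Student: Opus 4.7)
The plan is a standard monotone truncation argument. Fix $x_t \in \cX_t$ and a nonnegative Borel measurable $V$. For each positive integer $n$, define the truncation $V_n := V \wedge n$. Each $V_n$ is a bounded Borel measurable function on $\cX_{t+1}$, so Assumption \ref{assum} (4) applies and tells us that the map
\begin{equation*}
	u \mapsto \int V_n(x_{t+1}) \, P_t(dx_{t+1} | x_t, u)
\end{equation*}
is continuous in $u$.

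Next I would pass to the limit. Since $V \geq 0$, we have $V_n \uparrow V$ pointwise as $n \to \infty$, so by the monotone convergence theorem,
\begin{equation*}
	\int V_n(x_{t+1}) \, P_t(dx_{t+1} | x_t, u) \; \uparrow \; \int V(x_{t+1}) \, P_t(dx_{t+1} | x_t, u)
\end{equation*}
for every $u$. In particular, $u \mapsto \int V(x_{t+1}) P_t(dx_{t+1}|x_t,u)$ equals the pointwise supremum of the continuous functions $u \mapsto \int V_n(x_{t+1}) P_t(dx_{t+1}|x_t,u)$.

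To finish, I would invoke the elementary fact that a pointwise supremum of a family of continuous (indeed, merely l.s.c.) real-valued functions is l.s.c. (allowing $+\infty$ as a value). This immediately yields the desired lower semicontinuity of $u \mapsto \int V(x_{t+1}) P_t(dx_{t+1}|x_t,u)$ on $\cA_t$ (restricted to $U_t(x_t)$ if one prefers). There is no real obstacle here; the only minor point to acknowledge is that the integral against $V$ could be $+\infty$, which is consistent with the extended-real-valued notion of l.s.c. used implicitly when applying Lemma \ref{lem:select} to possibly unbounded costs.
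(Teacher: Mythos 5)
Your proof is correct and follows essentially the same route as the paper: truncate $V$ to bounded Borel functions (the paper just posits an increasing bounded sequence, you use the explicit $V\wedge n$), apply Assumption \ref{assum} (4) to each truncation, pass to the limit via monotone convergence, and conclude by the fact that a supremum of continuous functions is l.s.c. No gaps.
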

\begin{proof}
	For a nonnegative Borel measurable function $V$, there exists a sequence $V_k$ of Borel measurable and bounded functions converging increasingly to $V$. By monotone convergence theorem,
	\begin{align*}
		\int V(x_{t+1}) P_t(dx_{t+1}| x_t, u) = \int \sup_k V_k(x_{t+1}) P_t(dx_{t+1}| x_t, u) = \sup_k \int V_k(x_{t+1}) P_t(dx_{t+1}| x_t, u),
	\end{align*}
	with a fixed $x_t$. Under Assumption \ref{assum} (4), it is a supremum of continuous functions in $u$. Thus, it is l.s.c. in $u$ by \citet[Box 1.5, p.6]{santambrogio2015optimal}.
\end{proof}

Assumption \ref{assum} (4) is crucial and specifies the topology needed for the proof of the existence Theorem \ref{thm:exist}. The usual weak topology considers bounded and continuous functions $V$, which is not enough to apply the Borel measurable selection theorem \ref{lem:select} repeatedly. Assumption \ref{assum} (4) implies the setwise convergence. It is widely used in the literature of Markov decision processes with average costs; see \citet[Condition (S)(2)]{schal1993average}, \citet[Assumption 2.1 (b)]{hernandez1991scl}, \citet[Assumption 8.3.1 (c)]{hernandez1999}, and \citet[Assumption 3.1 (b)]{saldi2018finite}. 
\begin{theorem}\label{thm:exist}
	Suppose Assumption \ref{assum} holds, then there exists a nonrandomized Markov equilibrium policy $\pi^* = (u^*_1, \ldots, u^*_{T-1})$.
\end{theorem}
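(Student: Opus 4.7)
The plan is to proceed by backward induction on $t$ from $T-1$ down to $1$, at each step invoking Lemma~\ref{lem:select} on $D = \Gamma_t$ applied to the function $u \mapsto \cL_{t+1}(t, x_t, x_t, u)$ to produce a Borel measurable minimizer $u^*_t: \cX_t \to \cA_t$. The base case $t = T-1$ is clean: $\pi^{T,*}$ is the empty policy, so the relevant kernel in \eqref{eq:bfh} collapses to $\delta_{x_T}$, giving $f(T-1, x_{T-1}, x_T) = F(T-1, x_{T-1}, x_T)$ and $h(x_T) = H(x_T)$, while the $b_k$ term disappears as an empty sum; the hypotheses of Lemma~\ref{lem:select} reduce to standard properties of $C_{T-1}, F, H, G$ from Assumption~\ref{assum}.

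For the induction step, assume $u^*_{t+1}, \ldots, u^*_{T-1}$ have been constructed as Borel measurable functions. I would first check that the auxiliary functions $b_k, f, h$ defined in \eqref{eq:bfh} are nonnegative (Assumption~\ref{assum}~(1)) and Borel measurable (composition of Borel measurable maps $C_k(s, y, x_k, u^*_k(x_k))$ together with iterated integration against the Borel measurable stochastic kernels $P_j$). Then I would verify three properties of $\cL_{t+1}(t, x_t, x_t, u)$ on $\Gamma_t$: (i) joint Borel measurability in $(x_t, u)$; (ii) l.s.c. in $u$ for each fixed $x_t$; (iii) compactness of the level set $\{u \in U_t(x_t) : \cL_{t+1}(t, x_t, x_t, u) \leq r\}$ for each $x_t \in \cX_t$ and $r \in \R$. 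Property (i) is routine, since integrals of Borel measurable functions against Borel measurable stochastic kernels are jointly Borel measurable and $G$ is Borel measurable. For (ii), Assumption~\ref{assum}~(2) gives l.s.c.\ of $C_t(t, x_t, x_t, \cdot)$; Lemma~\ref{lem:lsc} supplies l.s.c.\ in $u$ of each of the integrals $\int b_k\,P_t$, $\int f\,P_t$, $\int h\,P_t$ since the integrands are nonnegative Borel measurable; and Assumption~\ref{assum}~(3) lets l.s.c.\ pass through the outer $G$ via the fact that an l.s.c.\ nondecreasing function composed with an l.s.c.\ function is l.s.c. A finite sum of l.s.c.\ functions remains l.s.c. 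For (iii), nonnegativity of the four terms after $C_t$ forces $\cL_{t+1}(t, x_t, x_t, u) \leq r \Rightarrow C_t(t, x_t, x_t, u) \leq r$, so the level set is contained in the compact set $U_t(r, x_t)$ from Assumption~\ref{assum}~(2); combined with the closedness coming from (ii), it is a closed subset of a compact set, hence compact.

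The main obstacle, and the reason for the particular shape of Assumption~\ref{assum}~(4), is establishing l.s.c.\ of the $G$-composition term. The setwise-type continuity in Assumption~\ref{assum}~(4) yields continuity of $u \mapsto \int V\,P_t(dx_{t+1}|x_t, u)$ only for bounded $V$; Lemma~\ref{lem:lsc} upgrades this to l.s.c.\ for general nonnegative $V$ via monotone approximation, but not to continuity. Consequently, to obtain l.s.c.\ of $G(t, x_t, \int h\,P_t(dx_{t+1}|x_t, u))$ one cannot simply use continuity of the inner integral; the l.s.c.\ must be transported through $G$, and this is exactly what the monotonicity in Assumption~\ref{assum}~(3) enables. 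Once this point is handled, the rest of the argument is a clean backward induction, with Lemma~\ref{lem:select} applied $T-1$ times to produce $\pi^* = (u^*_1, \ldots, u^*_{T-1})$.
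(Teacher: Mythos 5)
Your proposal is correct and follows essentially the same route as the paper: backward induction, with Lemma~\ref{lem:lsc} giving lower semicontinuity of the integral terms, Assumption~\ref{assum}~(3) transporting l.s.c.\ through the nondecreasing $G$, and nonnegativity trapping the level set of $\cL_{t+1}$ inside the compact set $U_t(r,x_t)$ so that Lemma~\ref{lem:select} applies at each step. The only difference is cosmetic: you spell out the joint Borel measurability check and the degenerate form of \eqref{eq:bfh} at the base case, which the paper leaves implicit.
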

\begin{proof}
	We prove the claim by backward induction. At time $T-1$, the objective function is 
	\begin{align*}
	\cL_T (T-1, x_{T-1}, x_{T-1}, u) = &  C_{T-1}(T-1, x_{T-1}, x_{T-1}, u)  \\
	& + \int F(T-1, x_{T-1}, x_{T})  P_{T-1}(dx_T| x_{T-1}, u) \\
	& + G\left(T-1, x_{T-1}, \int H(x_T) P_{T-1}(dx_T| x_{T-1}, u) \right). 
	\end{align*}
	By  Lemma \ref{lem:lsc}, $\int F(T-1, x_{T-1}, x_{T})  P_{T-1}(dx_T| x_{T-1}, u)$ and $\int H(x_T) P_{T-1}(dx_T| x_{T-1}, u)$ are l.s.c. in $u$ for each $x_{T-1}$. Since $G(T-1, x_{T-1}, \cdot)$ is l.s.c. and nondecreasing, the composition $G(T-1, x_{T-1},$ $ \int H(x_T) P_{T-1}(dx_T| x_{T-1}, u))$ is l.s.c. in $u$ when $x_{T-1}$ is fixed. Hence, the objective $\cL_T (T-1, x_{T-1}, x_{T-1}, u)$ is l.s.c. in $u$ for each $x_{T-1}$.
	
	Since $F, G \geq 0$, $\cL_T (T-1, x_{T-1}, x_{T-1}, u) \leq r$ implies that $C_{T-1}(T-1, x_{T-1}, x_{T-1}, u) \leq r$. Therefore,
	\begin{align}\label{eq:subset}
			\{ u \in U_{T-1}(x_{T-1}) | \cL_T(T-1, x_{T-1}, x_{T-1}, u) \leq r \} 
	\end{align} 
	is a subset of the compact set $U_{T-1}(r, x_{T-1})$ defined in \eqref{eq:lowerset}. Moreover, the subset in \eqref{eq:subset} is closed by the definition of lower semicontinuity. Since closed subsets of compact sets are compact, we have verified the inf-compact condition \eqref{eq:inf_compact}. By Lemma \ref{lem:select}, there exists a Borel measurable minimizer $u^*_{T-1}(x_{T-1})$ for 
	\begin{equation*}
		V_{T-1} (x_{T-1}) = \inf_{u \in U_{T-1}(x_{T-1})} \cL_T (T-1, x_{T-1}, x_{T-1}, u). 
	\end{equation*}
	Moreover, 
	\begin{align*}
		b_{T-1}(s, y, x_{T-1}) &:= C_{T-1} (s, y, x_{T-1}, u^*_{T-1}(x_{T-1})), \\
		f(s, y, x_{T-1}) &:= \int F(s, y, x_T) P_{T-1}(dx_T| x_{T-1}, u^*_{T-1}(x_{T-1})), \\
		h(x_{T-1}) & :=  \int H(x_T) P_{T-1}(dx_T| x_{T-1}, u^*_{T-1}(x_{T-1})),
	\end{align*}
	and $V_{T-1} (x_{T-1})$ are nonnegative Borel measurable functions.
	
	At time $T-2$, the objective is
	\begin{align*}
		\cL_{T-1}(T-2, x_{T-2}, x_{T-2}, u) = &  C_{T-2}(T-2, x_{T-2}, x_{T-2}, u) \\
		& + \int b_{T-1}(T-2, x_{T-2}, x_{T-1})  P_{T-2}(dx_{T-1}| x_{T-2}, u) \\
		& + \int f(T-2, x_{T-2}, x_{T-1})  P_{T-2}(dx_{T-1}| x_{T-2}, u) \\
		& + G\left(T-2, x_{T-2}, \int h(x_{T-1}) P_{T-2}(dx_{T-1}| x_{T-2}, u) \right). 
	\end{align*}
	Similarly, $\cL_{T-1}(T-2, x_{T-2}, x_{T-2}, u)$ is l.s.c. in $u$ for each $x_{T-2}$. Hence, we can prove the stated claim backwardly with Lemma \ref{lem:select}.
\end{proof}

Lemma \ref{lem:cont} gives a sufficient condition for Assumption \ref{assum} (4). The proof is similar to \citet[Lemma 4.1]{feinberg2007optimality}, while some calculation details are different. We include the proof in the Appendix for the completeness of the paper.  
\begin{lemma}\label{lem:cont}
	 Let the state spaces $\cX_t = \R$. Consider the transition kernels $P_t(dx_{t+1}| x_t, u)$ specified by the following system:
	\begin{equation}
		x_{t+1} = \mu(x_t, u_t) + \sigma(x_t, u_t) W_t, \quad t \in \{1, \ldots, T-1\}.
	\end{equation}
	Suppose
	\begin{enumerate}[label={(\arabic*)}]
		\item $\{ W_t \}_{t=1}^{T-1}$ are i.i.d. random noises with a Borel density function $p(w)$, $w \in (-\infty, \infty)$, with respect to the Lebesgue measure.
		\item The Borel measurable functions $\mu(x_t, u)$ and $\sigma(x_t, u)$ are continuous in $u$ for each $x_t \in \R$. 
		\item There exists a constant $\underline{\sigma} > 0$ such that $\sigma(x_t, u) \geq \underline{\sigma}$.
	\end{enumerate}
	Then $\int V(x_{t+1}) P_t(dx_{t+1}| x_t, u)$ is continuous in $u$ for each $x_t \in \R$ and bounded Borel measurable function $V$. 
\end{lemma}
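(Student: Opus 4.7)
The plan is to reduce the claim to an $L^1$ continuity statement for shifted and rescaled densities and then dispatch that via a standard approximation argument. Since $\sigma(x_t, u) \geq \underline{\sigma} > 0$, the change of variable $y = \mu(x_t, u) + \sigma(x_t, u) w$ yields
\[ \int V(x_{t+1}) P_t(dx_{t+1}| x_t, u) = \int V(y) g_u(y)\, dy, \qquad g_u(y) := \frac{1}{\sigma(x_t, u)}\, p\!\left( \frac{y - \mu(x_t, u)}{\sigma(x_t, u)} \right). \]
For any bounded Borel measurable $V$ with $|V| \leq M$, every sequence $u_n \to u$ satisfies $\left|\int V(g_{u_n} - g_u)\, dy\right| \leq M\, \|g_{u_n} - g_u\|_{L^1(\R)}$, so it suffices to show $\|g_{u_n} - g_u\|_{L^1(\R)} \to 0$.

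I would then handle this $L^1$ convergence by approximating the Borel density $p$ by continuous functions of compact support. Given $\varepsilon > 0$, choose $p^{\varepsilon} \in C_c(\R)$ with $\|p - p^{\varepsilon}\|_{L^1} < \varepsilon$, and define $g_u^{\varepsilon}$ by the same formula as $g_u$ but with $p^{\varepsilon}$ in place of $p$. A direct change of variable shows $\|g_u - g_u^{\varepsilon}\|_{L^1} = \|p - p^{\varepsilon}\|_{L^1} < \varepsilon$ uniformly in $u$. For the continuous compactly supported $p^{\varepsilon}$, set $a_n := \mu(x_t, u_n) \to a := \mu(x_t, u)$ and $b_n := \sigma(x_t, u_n) \to b := \sigma(x_t, u)$ by assumption (2), with $b_n, b \geq \underline{\sigma}$ by assumption (3). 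Continuity of $p^{\varepsilon}$ gives pointwise convergence $g_{u_n}^{\varepsilon}(y) \to g_u^{\varepsilon}(y)$ for every $y$, together with the uniform bound $|g_{u_n}^{\varepsilon}(y)| \leq \|p^{\varepsilon}\|_{\infty} / \underline{\sigma}$ and a common bounded support (since $a_n, b_n$ are bounded and $p^{\varepsilon}$ is compactly supported). Dominated convergence therefore gives $\|g_{u_n}^{\varepsilon} - g_u^{\varepsilon}\|_{L^1} \to 0$.

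Combining the three pieces via the triangle inequality,
\[ \|g_{u_n} - g_u\|_{L^1} \leq \|g_{u_n} - g_{u_n}^{\varepsilon}\|_{L^1} + \|g_{u_n}^{\varepsilon} - g_u^{\varepsilon}\|_{L^1} + \|g_u^{\varepsilon} - g_u\|_{L^1} < 2\varepsilon + o(1), \]
and letting $n \to \infty$ followed by $\varepsilon \to 0$ completes the proof. The only delicate point is that $p$ is merely Borel measurable, so one cannot apply pointwise-in-$w$ continuity to $V(\mu(x_t, u) + \sigma(x_t, u) w)$ directly (both $V$ and $p$ are non-continuous). The remedy is precisely the change-of-variables-then-$C_c$-density step above, where the lower bound $\sigma \geq \underline{\sigma}$ is essential to keep the Jacobian and the $L^1$ translation/dilation invariance uniform in $u$.
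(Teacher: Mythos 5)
Your proof is correct and shares the paper's overall strategy: both change variables to move the $u$-dependence from the argument of $V$ into the density, reduce the claim to $L^1$ convergence of $g_{u_n} \to g_u$, and then approximate the merely Borel density $p$ by a continuous function. Where you differ is in how that approximation is executed. The paper works directly with $p$: it truncates the domain to a compact interval $[-K^*,K^*]$ via tail-mass control, truncates $p$ at a level $G$, invokes Lusin's theorem to replace the truncated density by a continuous $q_G$ agreeing with it off a set of small measure, and finishes with uniform continuity of $(a,b,y)\mapsto a\,q_G(ay+b)$ on a compact box. You instead invoke the density of $C_c(\R)$ in $L^1(\R)$ once and exploit the exact identity $\|g_u - g_u^{\varepsilon}\|_{L^1} = \|p - p^{\varepsilon}\|_{L^1}$ (affine changes of variable are $L^1$-isometries on densities), which makes the approximation error uniform in $u$ for free; the continuous, compactly supported piece is then handled by dominated convergence rather than uniform continuity. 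The two routes are logically close cousins --- the $C_c$-density theorem is itself usually proved via Lusin's theorem or regularity of Lebesgue measure, so you have in effect outsourced the paper's explicit $\delta$, $\delta'$, $K^*$, $G$ bookkeeping to a standard black box. Your version is shorter and more transparent; the paper's is self-contained at the level of Lusin's theorem and mirrors the structure of the Feinberg--Lewis lemma it cites as its model. Both arguments correctly identify the role of the lower bound $\sigma \geq \underline{\sigma} > 0$ in keeping the Jacobian and the rescaling uniformly controlled.
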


\begin{remark}
	\begin{enumerate}[label={(\arabic*)}]
		\item If we assume the density $p(w)$ of random noises is continuous, then Lemma \ref{lem:cont} follows from Scheff\'e's Theorem \cite[p.117]{durrett2019probability}.
		\item The assumption  $\sigma(x_t, u) \geq \underline{\sigma} > 0$ is needed. For example, consider the state following $x_{t+1} = u_t W_t$, where $W_t \sim N(0, 1)$ is a standard normal random variable. The Borel measurable function $V$ satisfies $ V(0) = 1$ and $V(x) = 0$ if $x \neq 0$. Then $\int V(x_{t+1}) P_t(dx_{t+1}| x_t, u) = \int V(uw) p(w) dw = 1$ if $u=0$ and $0$ if $u \neq 0$. Hence, it is not l.s.c. at $u=0$.
	\end{enumerate}
\end{remark}

\section{Examples}\label{sec:example}
\subsection{Time-inconsistent linear-quadratic regulator}
Consider a time-inconsistent linear-quadratic regulator in \citet[Section 9.4]{bjork14FS}. The state-dependent cost objective is given by
\begin{align}\label{example1}
	J_t(x; \pi^t) =& \int \Big[ \sum^{T-1}_{k=t} u^2_k \Big] P(dx_{t+1:T}| x, \pi^t) +  \int  (x_T - x)^2 P(dx_{T}| x, \pi^t),
\end{align}
with a scalar state following
\begin{equation*}
	x_{t+1} = a x_t + b u_t + \sigma W_{t}, 
\end{equation*}
where $a$ and $b$ are known constants. The random noises $\{W_{t}\}^{T-1}_{t=1}$ are i.i.d. and follow standard normal distribution $N(0, 1)$. By Lemma \ref{lem:cont}, Assumption \ref{assum} (4) is valid. It is also direct to verify other conditions in Assumption \ref{assum}. By Theorem \ref{thm:exist}, a nonrandomized Markov equilibrium policy exists. The explicit solution is given in \citet[Section 9.4]{bjork14FS}. 

Besides the objective \eqref{example1}, we can also consider nonlinear cost functionals satisfying Assumption \ref{assum}, such as 
\begin{align*}
	J_t(x; \pi^t) =& \int \Big[ \sum^{T-1}_{k=t} u^2_k \Big] P(dx_{t+1:T}| x, \pi^t) +  \left( \int  \max \left\{x_T , 0 \right\} P(dx_{T}| x, \pi^t) \right)^2,
\end{align*}
since $h^2$ is a continuous and nondecreasing function on $[0, \infty)$.

\subsection{Mean-variance portfolio selection}\label{sec:MVP}
Assumption \ref{assum} is only a sufficient condition for the existence. In fact, if we can verify that the objective in \eqref{eq:prob} is inf-compact backwardly, then the existence holds. 

Consider the mean-variance portfolio selection with the objective
\begin{equation}\label{example3}
	J_t(x; \pi^t) = \text{Var}_t[x_T] - \gamma \E_t[x_T] = \int (x^2_T - \gamma x_T) P(dx_{T}| x, \pi^t) - \left( \int x_T P(dx_{T}| x, \pi^t) \right)^2
\end{equation}
and the wealth (state) process
\begin{equation}\label{eq:wealth}
	x_{t+1} = R x_t + u_t Z_t.
\end{equation}
$R \geq 1$ is a known constant. $\{Z_t\}^{T-1}_{t=1}$ are i.i.d. random variables with mean $\mu$ and variance $\sigma^2 > 0$.

With our notations, the objective \eqref{example3} corresponds to
\begin{equation*}
	F(x_T) = x^2_T - \gamma x_T, \quad H(x_T) = x_T, \quad G(h) = - h^2.
\end{equation*}

While Assumption \ref{assum} is not satisfied, we have the following proposition for the inf-compactness and the existence.
\begin{proposition}
	For the objective \eqref{example3} with the state \eqref{eq:wealth}, the level set \eqref{eq:Linf} is inf-compact and thus a nonrandomized Markov equilibrium policy exists. 
\end{proposition}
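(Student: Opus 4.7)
The plan is a backward induction on $t$ carrying the structural invariant that each equilibrium control $u^*_k$ already constructed is a \emph{constant} (independent of the state $x_k$). Under this invariant the auxiliary functions in \eqref{eq:bfh} become explicit: $b_k \equiv 0$ (because $C_k \equiv 0$), $h$ is affine in $x_{t+1}$, and $f$ is a quadratic-plus-affine function of $h$. A short calculation should then collapse $\cL_{t+1}(t, x_t, x_t, u)$ into a strictly convex quadratic in $u$, which immediately gives inf-compactness of \eqref{eq:Linf} and a constant Borel measurable minimizer via Lemma~\ref{lem:select}, preserving the invariant.

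For the base case $t = T-1$, I would substitute $F(x_T) = x_T^2 - \gamma x_T$, $H(x_T) = x_T$, $G(h) = -h^2$, and $X_T = R x_{T-1} + u Z_{T-1}$ directly into $\cL_T(T-1, x_{T-1}, x_{T-1}, u)$. Using $\E[Z_{T-1}] = \mu$ and $\text{Var}[Z_{T-1}] = \sigma^2$, the two copies of $(R x_{T-1} + u\mu)^2$ --- one coming from $\E[X_T^2]$ inside $\int F\,dP$, the other from $G(\int H\,dP) = -(\E[X_T])^2$ --- cancel, leaving $\sigma^2 u^2 - \gamma\mu u - \gamma R x_{T-1}$, a strictly convex quadratic in $u$ with $\sigma^2 > 0$. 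Its sublevel sets in $u$ are bounded closed intervals, hence compact, so Lemma~\ref{lem:select} delivers the constant minimizer $u^*_{T-1} = \gamma\mu/(2\sigma^2)$.

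For the inductive step, given constant $u^*_{t+1}, \ldots, u^*_{T-1}$, I would unroll \eqref{eq:wealth} to write $X_T = R^{T-1-t} X_{t+1} + \sum_{k=t+1}^{T-1} R^{T-1-k} u^*_k Z_k$, whence $h(x_{t+1}) = R^{T-1-t} x_{t+1} + A_{t+1}$ and $f(t, x_t, x_{t+1}) = B_{t+1} + h(x_{t+1})^2 - \gamma h(x_{t+1})$ for constants $A_{t+1}, B_{t+1}$. With $m := \E[h(X_{t+1}) \mid x_t, u] = R^{T-t} x_t + R^{T-1-t}\mu u + A_{t+1}$, the identity $\E[h(X_{t+1})^2 \mid x_t, u] - m^2 = \text{Var}[h(X_{t+1}) \mid x_t, u] = R^{2(T-1-t)} \sigma^2 u^2$ forces the $m^2$ contributions from $\int f\,dP$ and from $G(\int h\,dP) = -m^2$ to cancel, leaving
\[
\cL_{t+1}(t, x_t, x_t, u) = R^{2(T-1-t)} \sigma^2 u^2 - \gamma R^{T-1-t}\mu u + \text{(terms independent of } u\text{)}.
\]
This is again a strictly convex quadratic in $u$ with positive leading coefficient $R^{2(T-1-t)}\sigma^2$, so inf-compactness \eqref{eq:Linf} holds as before, Lemma~\ref{lem:select} produces a constant minimizer $u^*_t = \gamma\mu/(2R^{T-1-t}\sigma^2)$, and the invariant propagates.

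The main obstacle is that Assumption~\ref{assum} is unavailable here: $C_t \equiv 0$ fails the inf-compactness part of~(2), and $G(h) = -h^2$ is \emph{decreasing} rather than nondecreasing, so~(3) also fails. Inf-compactness cannot be inherited from $C_t$, nor can l.s.c.\ of the composition be read off from the argument used in the proof of Theorem~\ref{thm:exist}; both must be extracted from the specific mean-variance structure. The nontrivial check is that after the $m^2$-terms cancel exactly, the coefficient of $u^2$ that survives is precisely the conditional variance of $h(X_{t+1})$, which is strictly positive because $\sigma^2 > 0$ and the affine slope of $h$ does not vanish --- the same nondegeneracy spirit as in the second remark after Lemma~\ref{lem:cont}. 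Verifying that the affine/quadratic structure really does propagate through the backward induction, so the cancellation stays exact at every step, is the technical heart of the argument.
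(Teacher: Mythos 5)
Your proposal is correct and follows essentially the same route as the paper's proof: backward induction carrying the invariant that each $u^*_k$ is constant, unrolling the wealth process to make $h$ affine and $f$ quadratic in $x_{t+1}$, and observing that the surviving coefficient of $u^2$ in $\cL_{t+1}$ is $R^{2(T-1-t)}\sigma^2 > 0$, which gives compact sublevel sets and a constant minimizer. Your variance-decomposition shortcut (the exact cancellation of the $m^2$ terms) is a cleaner way to organize the same term-by-term expansion the paper carries out explicitly.
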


\begin{proof}

At time $T-1$, it is direct to show the objective is quadratic in $u$ and an equilibrium control $u^*_{T-1}$ is a constant independent of $x_{T-1}$. To apply backward induction, we assume the equilibrium controls $\{u^*_{t+1}, \ldots, u^*_{T-1} \}$ exist and are constant. Then we calculate the objective at time $t$. The wealth process yields
\begin{equation*}
	x_T = R^{T-t-1} x_{t+1} + \sum_{k=t+1}^{T-1} R^{T-1-k}u^*_k Z_k.
\end{equation*}
 Therefore,
 \begin{equation*}
 	h(x_{t+1}) = \int x_T P(dx_T| x_{t+1}, \pi^{t+1,*}) = R^{T-t-1} x_{t+1} + \sum_{k=t+1}^{T-1} R^{T-1-k} \mu u^*_k =:   R^{T-t-1} x_{t+1} + h^0_{t+1}.
 \end{equation*}
\begin{align*}
	 \int x^2_T P(dx_T| x_{t+1}, \pi^{t+1,*}) = & R^{2(T-t-1)} x^2_{t+1} + 2 R^{T-t-1} x_{t+1}\sum_{k=t+1}^{T-1} R^{T-1-k} \mu u^*_k \\
	& +  \sum_{\substack{k \neq l, \\ k,l \in \{t+1, \ldots, T-1\}} } R^{T-1-k} R^{T-1-l} \mu^2 u^*_k u^*_l + \sum^{T-1}_{k=t+1} R^{2(T-1-k)} (u^*_k)^2 (\mu^2 + \sigma^2) \\
	=: & R^{2(T-t-1)} x^2_{t+1} + a^1_{t+1} x_{t+1} + a^0_{t+1}.
\end{align*}
At time $t$, we have $x_{t+1} = R x_t + u Z_t$, with the candidate equilibrium control $u$ to be solved. We note that
\begin{align*}
	& \int f(x_{t+1})  P_t(dx_{t+1}| x_t, u) \\
	 & \qquad = \int \big[ R^{2(T-t-1)} x^2_{t+1} + a^1_{t+1} x_{t+1} + a^0_{t+1} - \gamma (R^{T-t-1} x_{t+1} + h^0_{t+1}) \big] P_t(dx_{t+1}| x_t, u) \\
	 & \qquad = R^{2(T-t-1)} \big[R^2 x^2_t + 2 R x_t \mu u + (\mu^2 + \sigma^2)u^2 \big] + ( a^1_{t+1} - \gamma R^{T-t-1})(Rx_t + \mu u) \\
	 & \qquad \quad + a^0_{t+1} - \gamma h^0_{t+1}.
\end{align*} 
\begin{align*}
G\left(\int h(x_{t+1}) P_t(dx_{t+1}| x_t, u) \right) = - (R^{T-t}x_t + R^{T-t-1}\mu u + h^0_{t+1})^2.
\end{align*} 
Clearly, the objective $\cL_{t+1}(x_t, u) = \int f(x_{t+1})  P_t(dx_{t+1}| x_t, u) + G\left( \int h(x_{t+1}) P_t(dx_{t+1}| x_t, u) \right)$ at time $t$ is a quadratic function of $u$. Since the coefficient of $u^2$ is $R^{2(T-t-1)}\sigma^2 > 0$, the level set \eqref{eq:Linf} is compact. Thus, an equilibrium control $u^*_t$ at time $t$ exists. Indeed, $u^*_t$ is also constant, since coefficients of $u^2$ and $u$ do not rely on $x_t$. Then we can prove the existence by backward induction.
\end{proof} 

The existence of an equilibrium strategy with the so-called state-dependent risk aversion \cite[Section 9.2]{bjork14FS} can be proved similarly. Indeed, these two cases have explicit equilibrium policies.

\subsection{Non-exponential discounting}
Consider the portfolio selection with non-exponential discounting and the exponential utility:
\begin{equation}\label{example4}
	J_t(x; \pi^t) = \int \varphi(T - t) \frac{e^{-\gamma x_T}}{\gamma} P(dx_{T}| x, \pi^t),
\end{equation}
where $\gamma > 0$ is the risk aversion parameter and $\varphi(T - t) > 0$ is the non-exponential discounting function. The wealth process is still given by \eqref{eq:wealth}. The following existence result is directly from Theorem \ref{thm:exist} and Lemma \ref{lem:cont}.
\begin{proposition}
	Suppose the random variables $Z_t$ in \eqref{eq:wealth} satisfy Condition (1) in Lemma \ref{lem:cont}. Moreover, assume the control $ 0 < \underline{u} \leq u_t \leq \bar{u} < \infty$ is in a compact interval. Then there exists a nonrandomized Markov equilibrium policy for \eqref{example4}.
\end{proposition}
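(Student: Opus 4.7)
The plan is to verify all four parts of Assumption \ref{assum} for this problem and then invoke Theorem \ref{thm:exist}. The first step is to cast \eqref{example4} in the general framework of \eqref{eq:obj}: I set $C_k \equiv 0$, $H \equiv 0$, $G \equiv 0$, and $F(s, y, x_T) = \varphi(T-s) e^{-\gamma x_T}/\gamma$ (so that the state-dependence on $y$ is trivial, while the time-dependence on $s$ captures non-exponential discounting). The control constraint is the constant-valued $U_t(x_t) \equiv [\underline{u}, \bar{u}]$, so $\Gamma_t = \cX_t \times [\underline{u}, \bar{u}]$ is a Borel subset of $\cX_t \times \R$.

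Parts (1)--(3) of Assumption \ref{assum} are then essentially automatic. For (1), the positivity of $\varphi$ and $\gamma$ makes $F$ nonnegative and Borel measurable, while $C_k$, $G$, $H$ vanish identically. For (2), $C_t \equiv 0$ is trivially l.s.c. in $u$, and the level set $U_t(r, x_t)$ is either the compact interval $[\underline{u}, \bar{u}]$ (when $r \geq 0$) or empty (when $r < 0$), hence compact in either case. For (3), $G \equiv 0$ is trivially l.s.c. and nondecreasing in $h$.

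The one substantive step is verifying part (4), and for this I intend to invoke Lemma \ref{lem:cont}. Writing the wealth dynamics \eqref{eq:wealth} in the form $x_{t+1} = \mu(x_t, u) + \sigma(x_t, u) Z_t$ with $\mu(x_t, u) = R x_t$ and $\sigma(x_t, u) = u$, I observe that $Z_t$ carries a Borel density by the standing hypothesis; that $\mu$ and $\sigma$ are trivially continuous in $u$ for each $x_t$; and that the uniform lower bound $\sigma(x_t, u) = u \geq \underline{u} > 0$ holds because $u$ lies in $[\underline{u}, \bar{u}]$. Lemma \ref{lem:cont} then yields continuity of $\int V(x_{t+1}) P_t(dx_{t+1} | x_t, u)$ in $u$ for every bounded Borel $V$, which is exactly Assumption \ref{assum} (4). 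Theorem \ref{thm:exist} now delivers the desired nonrandomized Markov equilibrium policy.

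I do not anticipate any genuine obstacle here; the content of the proposition is essentially a consistency check between the hypotheses of Theorem \ref{thm:exist} and the specific problem at hand. The only conceptual point worth emphasizing is that the strict positivity hypothesis $\underline{u} > 0$ is not cosmetic: it is precisely what keeps $\sigma(x_t, u)$ bounded away from zero and therefore what allows Lemma \ref{lem:cont} to apply, in line with the counterexample recorded in the remark immediately following that lemma.
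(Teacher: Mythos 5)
Your proof is correct and follows exactly the route the paper intends: the paper offers no written proof beyond the remark that the result ``is directly from Theorem \ref{thm:exist} and Lemma \ref{lem:cont},'' and your verification of Assumption \ref{assum} (1)--(4) with $F(s,y,x_T)=\varphi(T-s)e^{-\gamma x_T}/\gamma$, $C_k\equiv G\equiv H\equiv 0$, $\mu(x_t,u)=Rx_t$, and $\sigma(x_t,u)=u\ge\underline{u}>0$ is precisely the omitted routine check. Your closing observation that $\underline{u}>0$ is what makes Lemma \ref{lem:cont} applicable is also the right point to emphasize.
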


\section*{Acknowledgment}
The authors thank the anonymous referees and editors for their valuable comments and suggestions that have greatly improved the manuscript.


\appendix
\section{Proof of Lemma \ref{lem:cont}} 
	Fix $x_t \in \R$. Consider a sequence $u^k$ converging to $u$. For notational simplicity, denote $\mu^k := \mu(x_t, u^k)$, $\mu := \mu(x_t, u)$, $\sigma^k := \sigma(x_t, u^k)$, and $\sigma := \sigma(x_t, u)$. Under condition (2), we have $\mu^k \rightarrow \mu$ and $\sigma^k \rightarrow \sigma$, when $k \rightarrow \infty$.
	
	A change of variables gives
	\begin{align*}
		\int V(x_{t+1}) P_t(dx_{t+1}| x_t, u^k) = \int^\infty_{-\infty} V(\mu^k + \sigma^k w) p(w) dw = \int^\infty_{-\infty} V(z) p\Big( \frac{z - \mu^k}{\sigma^k} \Big) \frac{dz}{\sigma^k}.
	\end{align*}
	Then
	\begin{align}
		&\left| \int V(x_{t+1}) P_t(dx_{t+1}| x_t, u^k) - \int V(x_{t+1}) P_t(dx_{t+1}| x_t, u) \right| \nonumber \\
		&\qquad \leq  \left| \int^\infty_{-\infty} V(z) p\Big( \frac{z - \mu^k}{\sigma^k} \Big) \frac{dz}{\sigma^k} - \int^\infty_{-\infty} V(z) p\Big( \frac{z - \mu}{\sigma} \Big) \frac{dz}{\sigma}\right| \nonumber \\
		& \qquad \leq M  \int^\infty_{-\infty}  \left| p\Big( \frac{z - \mu^k}{\sigma^k} \Big) \frac{1}{\sigma^k} - p\Big( \frac{z - \mu}{\sigma} \Big)  \frac{1}{\sigma} \right|dz, \label{eq1}
	\end{align}
	where the second inequality uses the boundedness of $|V| \leq M$. Denote $y = (z - \mu)/\sigma$, then
	\begin{align*}
		\int^\infty_{-\infty}  \left| p\Big( \frac{z - \mu^k}{\sigma^k} \Big) \frac{1}{\sigma^k} - p\Big( \frac{z - \mu}{\sigma} \Big)  \frac{1}{\sigma} \right|dz = \int^\infty_{-\infty}  \left| p\Big( \frac{\sigma y + \mu - \mu^k}{\sigma^k} \Big) \frac{1}{\sigma^k} - p(y)  \frac{1}{\sigma} \right| \sigma dy.
	\end{align*}
	Note that $\sigma > 0$ and can be moved into the absolute value. 
	
	For simplicity, denote $\tilde{\sigma}^k := \sigma/\sigma^k$ and $\tilde{\mu}^k := (\mu - \mu^k)/\sigma^k$. For given $0 < \delta < 1$ and $\delta' >0$, choose an integer $k$ large enough such that
	\begin{align*}
		|\tilde{\sigma}^k - 1| \leq \delta, \quad |\tilde{\mu}^k| \leq \delta'. 
	\end{align*}
	For a fixed $\varepsilon > 0$, consider $K > 0$ such that $\int^K_{-K} p(y) dy \geq 1 - \varepsilon/8$. Denote a constant $K^* := (K + \delta')/(1 - \delta)$. We have
	\begin{align*}
		\int^\infty_{K^*}  \left| p\Big( \frac{\sigma y + \mu - \mu^k}{\sigma^k} \Big) \frac{1}{\sigma^k} - p(y)  \frac{1}{\sigma} \right| \sigma dy & \leq \int^\infty_{K^*} p(\tilde{\sigma}^k y + \tilde{\mu}^k) \tilde{\sigma}^k dy + \int^\infty_{K^*} p(y) dy \\
		& \leq \int^\infty_{\tilde{\sigma}^k K^* + \tilde{\mu}^k} p(y') dy' + \int^\infty_{K^*} p(y) dy \\
		& \leq 2 \int^\infty_{K} p(y) dy,
	\end{align*}
	where the last inequality used the fact that $\max\{\tilde{\sigma}^k K^* + \tilde{\mu}^k, K^*\} \geq (1 - \delta) K^* - \delta' = K$, by the definition of $K^*$. Similarly, we have
	\begin{align*}
		\int^{-K^*}_{-\infty}  \left| p\Big( \frac{\sigma y + \mu - \mu^k}{\sigma^k} \Big) \frac{1}{\sigma^k} - p(y)  \frac{1}{\sigma} \right| \sigma dy
		& \leq 2 \int^{-K}_{-\infty} p(y) dy.
	\end{align*}
	Hence, noting the property of $K$, we obtain
	\begin{align}
		\int^\infty_{-\infty}  \left| p\Big( \frac{z - \mu^k}{\sigma^k} \Big) \frac{1}{\sigma^k} - p\Big( \frac{z - \mu}{\sigma} \Big)  \frac{1}{\sigma} \right|dz  \leq \int^{K^*}_{-K^*} \Big|p(\tilde{\sigma}^k y + \tilde{\mu}^k) \tilde{\sigma}^k - p(y) \Big| dy + \frac{\varepsilon}{4}. \label{eq2}
	\end{align}
	
	Next, we choose a constant $G > 0$, such that $\int^\infty_{-\infty} p(y) \id_{ \{p(y) > G \} } dy \leq \varepsilon/8$. Define 
	\begin{equation*}
		p_G(y) = p(y) \id_{\{p(y) \leq G \}} + G \id_{\{p(y) > G \}}, 
	\end{equation*}
	which is bounded. It yields
	\begin{align}
		& \int^{K^*}_{-K^*} \Big|p(\tilde{\sigma}^k y + \tilde{\mu}^k) \tilde{\sigma}^k - p(y) \Big| dy - \int^{K^*}_{-K^*} \Big|p_G(\tilde{\sigma}^k y + \tilde{\mu}^k) \tilde{\sigma}^k - p_G(y) \Big| dy \nonumber \\
		& \qquad \leq \int^{K^*}_{-K^*} \Big|p(\tilde{\sigma}^k y + \tilde{\mu}^k) \tilde{\sigma}^k - p_G(\tilde{\sigma}^k y + \tilde{\mu}^k) \tilde{\sigma}^k \Big| dy + \int^{K^*}_{-K^*} \Big|p(y) - p_G(y) \Big| dy \nonumber \\
		& \qquad \leq 2 \int^\infty_{-\infty} \Big|p(y) - p_G(y) \Big| dy = 2 \int^\infty_{-\infty} (p(y) - G) \id_{\{p(y) > G \}} dy \nonumber \\
		& \qquad \leq  2 \int^\infty_{-\infty} p(y) \id_{\{p(y) > G \}} dy \leq \varepsilon/4. \label{eq3}
	\end{align} 
	Finally, we apply Lusin's theorem to $p(y)$, which is Borel measurable on $\R$. There exists a continuous function $q(y)$, such that the Lebesgue measure of the set $\{p(y) \neq q(y)\}$ is not greater than $\varepsilon/(8G)$. Define
	\begin{equation*}
		q_G(y) = q(y) \id_{\{ 0 \leq q(y) \leq G \}} + G \id_{ \{ q(y) > G \} }.
	\end{equation*}
	Obviously, if $p(y) = q(y)$, then $p_G(y) = q_G(y)$. It implies that $\{p_G(y) \neq q_G(y) \} \subseteq \{ p(y) \neq q(y) \}$. As a consequence, 
	\begin{align}
		& \int^{K^*}_{-K^*} \Big|p_G(\tilde{\sigma}^k y + \tilde{\mu}^k) \tilde{\sigma}^k - p_G(y) \Big| dy \nonumber \\
		& \qquad \leq \int^{K^*}_{-K^*} \Big|q_G(\tilde{\sigma}^k y + \tilde{\mu}^k) \tilde{\sigma}^k - q_G(y) \Big| dy + \int^{K^*}_{-K^*} \Big|p_G(\tilde{\sigma}^k y + \tilde{\mu}^k) \tilde{\sigma}^k - q_G(\tilde{\sigma}^k y + \tilde{\mu}^k) \tilde{\sigma}^k \Big| dy \nonumber \\
		& \qquad \quad + \int^{K^*}_{-K^*} \Big|p_G(y) - q_G(y) \Big| dy \nonumber \\
		& \qquad \leq \int^{K^*}_{-K^*} \Big|q_G(\tilde{\sigma}^k y + \tilde{\mu}^k) \tilde{\sigma}^k - q_G(y) \Big| dy + 2 \int^\infty_{-\infty} \Big|p_G(y) - q_G(y) \Big| dy \nonumber \\
		& \qquad \leq \int^{K^*}_{-K^*} \Big|q_G(\tilde{\sigma}^k y + \tilde{\mu}^k) \tilde{\sigma}^k - q_G(y) \Big| dy + 2 \int^\infty_{-\infty} G \id_{ \{p_G(y) \neq q_G(y)\} } dy \nonumber \\
		& \qquad \leq \int^{K^*}_{-K^*} \Big|q_G(\tilde{\sigma}^k y + \tilde{\mu}^k) \tilde{\sigma}^k - q_G(y) \Big| dy + \varepsilon/4. \label{eq4}
	\end{align} 
	We can regard $q_G(a y + b) a$ as a function of three variables $(a, b, y)$. Clearly,  $q_G(a y + b) a$ is jointly continuous in $(a, b, y)$, since $q_G(\cdot)$ is continuous. By the uniform continuity on the compact set $[1 - \delta, 1 + \delta] \times [-\delta', \delta'] \times [-K^*, K^*]$, there exists an integer $N>0$ such that
	\begin{equation}\label{eq5}
		\sup_{k \geq N, y \in [-K^*, K^*]} \Big|q_G(\tilde{\sigma}^k y + \tilde{\mu}^k) \tilde{\sigma}^k - q_G(y) \Big| \leq \varepsilon/(8K^*).
	\end{equation}
	Combining \eqref{eq1}, \eqref{eq2}, \eqref{eq3}, \eqref{eq4}, and \eqref{eq5}, we obtain
	\begin{equation*}
		\left| \int V(x_{t+1}) P_t(dx_{t+1}| x_t, u^k) - \int V(x_{t+1}) P_t(dx_{t+1}| x_t, u) \right| \leq M \varepsilon. 
	\end{equation*}
	Since $\varepsilon > 0$ is arbitrary, the claim follows. 
\qed

\end{document}